\begin{document}

\newtheorem{theorem}{Theorem}
\newtheorem{lemma}[theorem]{Lemma}
\newtheorem{proposition}[theorem]{Proposition}
\newtheorem{corollary}[theorem]{Corollary}


\def\cA{\mathcal A}
\def\cB{\mathcal B}
\def\cC{\mathcal C}
\def\cD{\mathcal D}
\def\cE{\mathcal E}
\def\cF{\mathcal F}
\def\cG{\mathcal G}
\def\cH{\mathcal H}
\def\cI{\mathcal I}
\def\cJ{\mathcal J}
\def\cK{\mathcal K}
\def\cL{\mathcal L}
\def\cM{\mathcal M}
\def\cN{\mathcal N}
\def\cO{\mathcal O}
\def\cP{\mathcal P}
\def\cQ{\mathcal Q}
\def\cR{\mathcal R}
\def\cS{\mathcal S}
\def\cU{\mathcal U}
\def\cT{\mathcal T}
\def\cV{\mathcal V}
\def\cW{\mathcal W}
\def\cX{\mathcal X}
\def\cY{\mathcal Y}
\def\cZ{\mathcal Z}


\def\sA{\mathscr A}
\def\sB{\mathscr B}
\def\sC{\mathscr C}
\def\sD{\mathscr D}
\def\sE{\mathscr E}
\def\sF{\mathscr F}
\def\sG{\mathscr G}
\def\sH{\mathscr H}
\def\sI{\mathscr I}
\def\sJ{\mathscr J}
\def\sK{\mathscr K}
\def\sL{\mathscr L}
\def\sM{\mathscr M}
\def\sN{\mathscr N}
\def\sO{\mathscr O}
\def\sP{\mathscr P}
\def\sQ{\mathscr Q}
\def\sR{\mathscr R}
\def\sS{\mathscr S}
\def\sU{\mathscr U}
\def\sT{\mathscr T}
\def\sV{\mathscr V}
\def\sW{\mathscr W}
\def\sX{\mathscr X}
\def\sY{\mathscr Y}
\def\sZ{\mathscr Z}


\def\fA{\mathfrak A}
\def\fB{\mathfrak B}
\def\fC{\mathfrak C}
\def\fD{\mathfrak D}
\def\fE{\mathfrak E}
\def\fF{\mathfrak F}
\def\fG{\mathfrak G}
\def\fH{\mathfrak H}
\def\fI{\mathfrak I}
\def\fJ{\mathfrak J}
\def\fK{\mathfrak K}
\def\fL{\mathfrak L}
\def\fM{\mathfrak M}
\def\fN{\mathfrak N}
\def\fO{\mathfrak O}
\def\fP{\mathfrak P}
\def\fQ{\mathfrak Q}
\def\fR{\mathfrak R}
\def\fS{\mathfrak S}
\def\fU{\mathfrak U}
\def\fT{\mathfrak T}
\def\fV{\mathfrak V}
\def\fW{\mathfrak W}
\def\fX{\mathfrak X}
\def\fY{\mathfrak Y}
\def\fZ{\mathfrak Z}


\def\C{{\mathbb C}}
\def\F{{\mathbb F}}
\def\K{{\mathbb K}}
\def\L{{\mathbb L}}
\def\N{{\mathbb N}}
\def\Q{{\mathbb Q}}
\def\R{{\mathbb R}}
\def\Z{{\mathbb Z}}


\def\eps{\varepsilon}
\def\mand{\qquad\mbox{and}\qquad}
\def\mor{\qquad\mbox{or}\qquad}
\def\\{\cr}
\def\({\left(}
\def\){\right)}
\def\[{\left[}
\def\]{\right]}
\def\<{\langle}
\def\>{\rangle}
\def\fl#1{\left\lfloor#1\right\rfloor}
\def\rf#1{\left\lceil#1\right\rceil}
\def\le{\leqslant}
\def\ge{\geqslant}
\def\ds{\displaystyle}
\def\lcm{\text{\rm lcm}}

\def\xxx{\vskip5pt\hrule\vskip5pt}
\def\imhere{ \xxx\centerline{\sc I'm here}\xxx }

\newcommand{\comm}[1]{\marginpar{
\vskip-\baselineskip \raggedright\footnotesize
\itshape\hrule\smallskip#1\par\smallskip\hrule}}


\def\person#1#2{
\begin{minipage}{\textwidth}
\begin{flushleft} \large
#1
\end{flushleft}
\end{minipage}
\begin{minipage}{\textwidth}
\begin{flushright}
#2
\end{flushright}
\end{minipage} \vskip1cm}


\begin{titlepage}
\begin{center}

\textbf{\LARGE Self-intersections} \\[0.4cm]

\textbf{\LARGE of the Riemann zeta function} \\[0.4cm]

\textbf{\LARGE on the critical line} \\[1.5cm]

\person{William \textsc{Banks}}
{Department of Mathematics \\
University of Missouri \\
Columbia, MO 65211, USA \\
{\tt bbanks@math.missouri.edu}}

\person{Victor \textsc{Castillo-Garate}}
{Department of Mathematics \\
University of Missouri \\
Columbia, MO 65211, USA \\
{\tt vactn7@mail.missouri.edu}}

\person{Luigi \textsc{Fontana}}
{Dipartimento di Matematica ed Applicazioni \\
Universit\'a di Milano--Bicocca \\
Via Cozzi, 53 \\
20125 Milano, ITALY \\
{\tt luigi.fontana@unimib.it}}

\person{Carlo \textsc{Morpurgo}}
{Department of Mathematics \\
University of Missouri \\
Columbia, MO 65211 USA \\
{\tt morpurgoc@missouri.edu}}

\end{center}

\end{titlepage}

\begin{abstract}
We show that the Riemann zeta function $\zeta$
has only countably many self-intersections
on the critical line, i.e., for all but countably many
$z\in\C$ the equation $\zeta(\tfrac12+it)=z$ has at most
one solution $t\in\R$. More generally, we prove that if
$F$ is analytic in a complex neighborhood
of $\R$ and locally injective on $\R$, then either the set
$$
\{(a,b)\in\R^2:a\ne b\text{~and~}F(a)=F(b)\}
$$
is countable, or the image $F(\R)$ is a loop in $\C$.
\end{abstract}

\bigskip\bigskip

\section{Introduction}
\label{sec:intro}

In the half-plane $\{s\in\C:\sigma>1\}$
the \emph{Riemann zeta function} is defined by
the equivalent expressions
$$
\zeta(s):=\sum_{n=1}^\infty n^{-s}
=\prod_{p\text{~prime}}(1-p^{-s})^{-1}.
$$
In the extraordinary memoir of Riemann~\cite{Riemann} it is shown
that $\zeta$ extends to a meromorphic function on the entire complex plane
with its only singularity being a simple pole at $s=1$, and it satisfies the
functional equation relating its values at $s$ and $1-s$.
The Riemann hypothesis asserts that every non-real zero of $\zeta$
lies on the \emph{critical line}
$$
\sL:=\{s\in\C:\sigma=\tfrac12\}.
$$
By a \emph{self-intersection of $\zeta$ on the critical line} we mean an element of
the set
$$
\{(s_1,s_2)\in\sL^2:s_1\ne s_2\text{~and~}\zeta(s_1)=\zeta(s_2)\}.
$$
Our aim in the present paper is to prove that this set is countable.

\begin{theorem}
\label{thm:one}
The Riemann zeta function has only countably many
self-intersections on the critical line.
\end{theorem}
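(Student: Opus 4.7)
The plan is to deduce Theorem~\ref{thm:one} from the more general statement announced in the abstract, applied to the function $F:\R\to\C$ defined by $F(t):=\zeta(\tfrac12+it)$. Since $\zeta$ is meromorphic on $\C$ with a unique (simple) pole at $s=1$, the function $F$ extends holomorphically to the strip $\{z\in\C:|\Im z|<\tfrac12\}$, so the analyticity hypothesis of the general theorem holds automatically. It then remains to verify that (i)~$F$ is locally injective on $\R$, and (ii)~the image $F(\R)=\zeta(\sL)$ is not a loop in $\C$; granted (i) and (ii), the general theorem forces the self-intersection set to be countable, which is exactly Theorem~\ref{thm:one}.

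Condition (ii) is easy: by any of the classical $\Omega$-theorems for $\zeta$ on the critical line (already the Bohr--Landau result suffices), $|\zeta(\tfrac12+it)|$ is unbounded as $|t|\to\infty$. Hence $F(\R)$ is an unbounded subset of $\C$, while every loop in $\C$ is compact (being the continuous image of the circle $S^1$); so $F(\R)$ is not a loop.

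The main obstacle is condition (i). At any $t_0\in\R$ with $\zeta'(\tfrac12+it_0)\ne0$ one has $F'(t_0)=i\zeta'(\tfrac12+it_0)\ne0$, and the inverse function theorem yields a neighborhood of $t_0$ on which $F$ is injective. Since $\zeta'$ is non-constant and analytic, the exceptional set of $t_0\in\R$ where $\zeta'(\tfrac12+it_0)=0$ is at most discrete, so the problem reduces to establishing local injectivity at each such point. At such a $t_0$ the Taylor expansion $F(t_0+h)-F(t_0)=c_kh^k+c_{k+1}h^{k+1}+\cdots$ begins at some order $k\ge2$, and my plan is to analyze the analytic variety $V:=\{(a,b)\in\C^2:F(a)=F(b)\}$ near $(t_0,t_0)$: it always contains the diagonal, and local injectivity of $F$ at $t_0$ is equivalent to the statement that every other irreducible component of $V$ meets $\R^2$ only at isolated points near $(t_0,t_0)$. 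The tangent directions of these ``spurious'' components can be read off from the low-order Taylor coefficients of $\zeta$ at $\tfrac12+it_0$; the most promising route to preclude a real tangent (and hence a real one-parameter family of self-intersections accumulating at $t_0$) is an analytic-continuation argument anchored on the uniqueness of the pole of $\zeta$ at $s=1$, since a real symmetry of a spurious component would propagate, by the identity principle, to an identity of the form $\zeta(w)=\zeta(1-w+2it_0)$ incompatible with the location of that pole.
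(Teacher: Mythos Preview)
Your overall framework matches the paper exactly: deduce Theorem~\ref{thm:one} from the general curve theorem (Theorem~\ref{thm:two}) by verifying that $F(t)=\zeta(\tfrac12+it)$ is analytic near $\R$, that $F(\R)$ is unbounded (hence not a loop), and that $F$ is locally injective on $\R$. Your treatment of analyticity and of~(ii) is fine and coincides with the paper's.

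The gap is in~(i), specifically at the points $t_0$ with $\zeta'(\tfrac12+it_0)=0$. Your plan is to read off the non-diagonal branches of $V=\{F(a)=F(b)\}$ at $(t_0,t_0)$ and argue that a real branch would force an identity $\zeta(w)=\zeta(1-w+2it_0)$, contradicted by the pole. But the only information the leading term gives you is that a real tangent, if any, has slope $-1$; the branch itself is the graph of some local real-analytic involution $\phi$ with $\phi(t_0)=t_0$ and $\phi'(t_0)=-1$, and there is no reason for $\phi$ to be the affine reflection $t\mapsto 2t_0-t$. The identity principle yields only a \emph{local} relation $\zeta(\tfrac12+iz)=\zeta(\tfrac12+i\phi(z))$, and you have no mechanism to continue $\phi$ to a global (let alone affine) map of $\C$, so the pole at $s=1$ never enters. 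In short, the ``promising route'' does not close.

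The paper handles~(i) quite differently. First, it invokes the theorem of Levinson and Montgomery that $\zeta'(s)=0$ with $\Re s=\tfrac12$ forces $\zeta(s)=0$; this immediately disposes of all $t_0$ with $f(t_0)\ne 0$, since then $f'(t_0)\ne 0$. At the remaining points, namely the zeros of $\zeta$ on the critical line, the paper does \emph{not} analyze the variety $V$ at all. Instead it uses the argument function: writing $\vartheta(t)=\pi(N(t)-1)+g(t)$ with $g$ the Riemann--Siegel theta function, one checks that $g$ is strictly monotone for $|t|>\Theta\approx 6.29$, and that on a short enough interval around a zero $a$ one has $|g(t_1)-g(t_2)|<2$. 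Then $f(t_1)=f(t_2)\ne 0$ forces $g(t_1)-g(t_2)\in\pi\Z\cap(-2,2)=\{0\}$, hence $t_1=t_2$. This argument is arithmetic rather than function-theoretic and genuinely uses the explicit form of $\chi(s)$ in the functional equation; your pole-based idea is not a substitute for it.
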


In other words, the equation $\zeta(\tfrac12+it)=z$ has at most
one solution $t\in\R$ for all but countably many $z\in\C$.  This
complements the fact that $\zeta(\tfrac12+it)=z$ has at least
two solutions $t\in\R$ for infinitely many $z\in\C$, which follows from
a recent result of Banks and Kang~\cite[Theorem~1.2]{BanksKang}.  Moreover, it has
been conjectured in \cite{BanksKang} that $\zeta(\tfrac12+it)=z$
has no more than two solutions $t\in\R$ for every nonzero $z\in\C$;
our Theorem~\ref{thm:one} makes it clear that there are at most countably
many counterexamples to this conjecture.

There are two main ingredients in the proof of Theorem~\ref{thm:one}.
The first is that the curve $f(t):=\zeta(\frac12+it)$ is
\emph{locally injective on $\R$}, i.e., for every $t\in\R$ there is an
open real interval containing $t$ on which $f$ is one-to-one;
this is Proposition~\ref{prop:hairy} of~\S\ref{sec:Riemann}.
A result of Levinson and Montgomery~\cite{LevMont}
guarantees the local injectivity of $f$
around points $t$ for which $f(t)\ne 0$, and in Proposition~\ref{prop:hairy} the
local injectivity of $f$ is also established around points $t$ with $f(t)=0$.

The second ingredient in the proof of Theorem~\ref{thm:one} is 
the following general result about self-intersections of locally
injective analytic curves.

\begin{theorem}
\label{thm:two}
Let $F$ be a function which is analytic in a complex neighborhood of the
real line~$\R$, and suppose that $F$ is locally injective on $\R$.
If the set of self-intersections
$$
\{(a,b)\in\R^2:a\ne b\text{~and~}F(a)=F(b)\}
$$
is uncountable, then $F(\R)$ is a loop in $\C$.
\end{theorem}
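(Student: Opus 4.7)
The plan is to extract from the uncountability of $S$ a nontrivial real analytic function $\psi\colon I \to \R$ satisfying $F\circ\psi = F$ on an interval $I$, and then use the dynamics of $\psi$ to identify $F(\R)$ with the image of a compact interval whose endpoints agree under $F$---i.e., a loop. Preliminarily, since $F$ is analytic and locally injective on $\R$, $F'$ cannot vanish on $\R$: a zero of $F'$ of order $k-1\ge 1$ would give $F(t)-F(t_0)=c(t-t_0)^k+o((t-t_0)^k)$, making $F$ locally $k$-to-one. Since $\R^2$ is second countable, the uncountable set $S$ has a limit point $(a_0,b_0)$ within itself; local injectivity at $a_0$ precludes $a_0=b_0$ (otherwise distinct $a_n,b_n\to a_0$ with $F(a_n)=F(b_n)$ would contradict injectivity on a small interval), and continuity places $(a_0,b_0)\in S$.

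Near $(a_0,b_0)$, $F'(b_0)\ne 0$ lets the holomorphic implicit function theorem applied to $F(z)-F(w)=0$ produce a holomorphic $\Psi$ with $\Psi(a_0)=b_0$ and $F(\Psi(z))=F(z)$. By uniqueness, points of $S$ close to $(a_0,b_0)$ lie on the graph of $\Psi$, so $\Psi$ maps a real sequence $a_n\to a_0$ into $\R$; the real-analytic identity theorem forces $\operatorname{Im}\Psi|_\R$ to vanish near $a_0$, yielding a real analytic $\psi$ with $F\circ\psi=F$ and $\psi(a_0)=b_0\ne a_0$. Differentiating gives $\psi'(t)=F'(t)/F'(\psi(t))$, nowhere zero, so $\psi$ is strictly monotonic. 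A fixed point $\psi(p)=p$ with $\psi\not\equiv\operatorname{id}$ would, via the expansion $\psi(t)=t+c(t-p)^k+\cdots$ substituted into $F\circ\psi=F$, force $F'(p)=0$---a contradiction; since $\psi(a_0)\ne a_0$ excludes $\psi=\operatorname{id}$, there are no fixed points, and an intermediate-value argument further rules out $\psi$ decreasing (a decreasing $\psi$ starting above the diagonal would be forced to cross it). Thus $\psi$ is strictly increasing with, WLOG, $\psi(t)>t$, and maximally extending via the implicit function theorem at each real boundary point yields a maximal interval $I\subseteq\R$.

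The final step is to establish $F(\R)=F([t_0,\psi(t_0)])$ for some $t_0\in I$: since $F(\psi(t_0))=F(t_0)$, this image is the continuous image of a compact interval with matching endpoint values, hence a loop in $\C$. The covering is by an orbit argument. In the cleanest case $I=\R$, the orbit $\{\psi^n(t_0)\}_{n\in\Z}$ is strictly monotonic and tends to $\pm\infty$ (no fixed points), so the intervals $[\psi^n(t_0),\psi^{n+1}(t_0))$ partition $\R$, and every $s$ satisfies $\psi^{-n}(s)\in[t_0,\psi(t_0))$ for a unique $n$, giving $F(s)\in F([t_0,\psi(t_0)])$. When $I=(-\infty,\beta)$ with $\lim_{t\to-\infty}\psi(t)=-\infty$, the inverse $\phi:=\psi^{-1}$ is defined on all of $\R$ with $\phi(t)<t$, and the analogous argument applied to $\phi$ yields $F(\R)=F([\phi(t_0),t_0])$. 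The main obstacle is the residual case in which neither $\psi$ nor $\phi$ extends globally to $\R$ (e.g., the M\"obius-type example $\psi(t)=1/(1-t)$ on $(-\infty,1)$, whose orbits are finite): here one must trace the finite-orbit structure and use the functional equation at the boundary of $I$ to force $\lim_{t\to-\infty}F(t)=\lim_{t\to+\infty}F(t)$, extending $F$ continuously to the one-point compactification of $\R$ and realizing $F(\R)$ as a loop.
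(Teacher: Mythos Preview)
Your argument has a genuine gap at the outset: the claim that local injectivity of $F$ on $\R$ forces $F'\ne 0$ on $\R$ is false. Take $F(z)=z^3$: it is analytic, strictly increasing (hence globally injective) on $\R$, yet $F'(0)=0$. Your reasoning that a zero of $F'$ of order $k-1$ makes $F$ ``locally $k$-to-one'' is correct in a \emph{complex} neighborhood, but the hypothesis is only injectivity on \emph{real} intervals, and $t\mapsto t^k$ is injective on $\R$ whenever $k$ is odd. This false claim is load-bearing: you invoke $F'(b_0)\ne 0$ to apply the holomorphic implicit function theorem and produce $\psi$; you use $\psi'(t)=F'(t)/F'(\psi(t))$ to obtain strict monotonicity; and your no-fixed-point argument concludes by deriving $F'(p)=0$ as a contradiction. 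All three steps collapse at points where $F'$ vanishes, and such points cannot be excluded a priori.

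The paper confronts exactly this difficulty. In place of the implicit function theorem it proves a local result (essentially due to Mohon$'$ko): if $F(u_k)=G(v_k)$ along nonzero sequences $u_k\to 0$, $v_k\to 0$, then $F$ and $G$ map arbitrarily short intervals around $0$ onto one another. The proof handles higher-order vanishing by writing $F(z)=z^m(1+\cdots)$, $G(z)=b_n z^n(1+\cdots)$ and passing to reparametrizations $z\mapsto z^{m_1}$, $z\mapsto z^{n_1}$ (with $m_1 m=n_1 n=\lcm(m,n)$) that reduce to the nondegenerate case; the real local injectivity hypothesis enters only to exclude ``bounce-back,'' which forces $m_1,n_1$ to be odd. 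The resulting local homeomorphism $\phi_{a,b}$ plays the role of your $\psi$, and the paper shows it is increasing via a compactness argument on $[\alpha,\beta]^2$ rather than through fixed points. Your ``residual case'' at the end is also only sketched; the paper avoids any global extension of $\psi$ by proving a fundamental-domain statement on each compact interval $[-n,n]$ and then letting $n\to\infty$.
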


By a \emph{loop in $\C$} we mean the image of a continuous map
$L:[\gamma,\delta]\to\C$ such that $L(\gamma)=L(\delta)$.
Since every loop is compact, Theorem~\ref{thm:two}
applied with $F:=f$ immediately implies Theorem~\ref{thm:one}
in view of the fact that $\zeta$ is unbounded on
the critical line (see, for example, \cite[Theorem~8.12]{Titch}). 

The proof of Theorem~\ref{thm:two} in \S\ref{sec:sharing}
relies on intersection properties of analytic curves that
were first discovered by Markushevich~\cite{Mark}
and were later extended by Mohon$'$ko~\cite{Moho1,Moho2};
see Proposition~\ref{prop:mohonko} of \S\ref{sec:sharing}
and the remarks that follow.  We believe that the statement
of Theorem~\ref{thm:two} is new and may be of independent interest.

\section{Local injectivity}
\label{sec:Riemann}

\begin{proposition}
\label{prop:hairy}
The curve
\begin{equation}
\label{eq:fdefn}
f(t):=\zeta(\tfrac12+it)\qquad(t\in\R)
\end{equation}
is locally injective on $\R$.
\end{proposition}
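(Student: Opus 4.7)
\medskip
\noindent\textbf{Proof plan.}
The plan is to treat separately the points $t_0\in\R$ at which $f(t_0)\ne 0$ and those at which $f(t_0)=0$. For the former I would invoke the result of Levinson and Montgomery~\cite{LevMont} alluded to in~\S\ref{sec:intro}, which already guarantees local injectivity of $f$ on the open set $\{t\in\R:f(t)\ne 0\}$. The novel content is therefore the local injectivity of $f$ at each real zero $t_0$ of $\zeta(\tfrac12+i\cdot)$.

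For such a $t_0$ I would exploit the classical Riemann--Siegel factorisation
\begin{equation*}
f(t)=e^{-i\theta(t)}Z(t)\qquad(t\in\R),
\end{equation*}
where $\theta(t)$ is the Riemann--Siegel theta function and $Z(t):=e^{i\theta(t)}\zeta(\tfrac12+it)$ is Hardy's $Z$-function. The decisive structural fact, coming from the functional equation for $\zeta$ together with Schwarz reflection, is that $Z$ takes only real values, while $\theta$ is real-analytic on $\R$.

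I would then fix $\delta>0$ small enough that (i)~$t_0$ is the only zero of $Z$ in the interval $I_\delta:=(t_0-\delta,t_0+\delta)$ (possible because $Z$ is real-analytic and not identically zero), (ii)~$|\theta(t)-\theta(t_0)|<\pi/2$ on $I_\delta$, and (iii)~$\theta$ is strictly monotone on $I_\delta$. For any $t_1\ne t_2$ in $I_\delta$ with $f(t_1)=f(t_2)$: if $Z(t_1)=0$ then $f(t_1)=0$ forces $Z(t_2)=0$, and (i) gives $t_1=t_0=t_2$, contradicting $t_1\ne t_2$; otherwise $Z(t_1),Z(t_2)\ne 0$ and the identity $f(t_1)=f(t_2)$ yields
\begin{equation*}
\frac{Z(t_1)}{Z(t_2)}=e^{i(\theta(t_1)-\theta(t_2))},
\end{equation*}
a nonzero real number, so $\theta(t_1)-\theta(t_2)\in\pi\Z$; bound (ii) then collapses this to $\theta(t_1)=\theta(t_2)$, and (iii) forces $t_1=t_2$, again a contradiction.

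The main obstacle is securing (iii), namely showing $\theta'(t_0)\ne 0$. Using
\begin{equation*}
\theta'(t)=\tfrac12\operatorname{Re}\psi(\tfrac14+\tfrac{it}{2})-\tfrac12\log\pi
\end{equation*}
together with the inequality $-\operatorname{Im}\psi'(\tfrac14+\tfrac{it}{2})>0$ for $t>0$ (immediate from the series representation of the trigamma function), one sees that $\theta'$ is strictly increasing on $(0,\infty)$ with a unique zero at some $t^*\approx 2\pi$. Since $\theta$ is odd and the classical fact that $\zeta(\tfrac12+it)$ does not vanish for $|t|<14$ gives $|t_0|>t^*$, one concludes $\theta'(t_0)\ne 0$ at every real zero $t_0$ of $\zeta(\tfrac12+i\cdot)$. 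This appeal to the known zero-free interval on the critical line is the only place in the argument where quantitative information about the zeros of $\zeta$ enters in an essential way.
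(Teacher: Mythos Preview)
Your proposal is correct and follows essentially the same route as the paper: Levinson--Montgomery handles the points with $f(t_0)\ne 0$, and at a zero $t_0$ both arguments reduce to the monotonicity of the Riemann--Siegel phase (your $\theta$, the paper's $-g$) beyond its unique positive critical point $\approx 6.29$, combined with the fact that all critical-line zeros have $|t_0|>14$. The only cosmetic difference is that you package the phase argument via the real-valuedness of Hardy's $Z$-function, whereas the paper writes $\arg f(t)=\pi(N(t)-1)+g(t)$ and lets the integer jump $N(t_1)-N(t_2)$ play the role of your $\pi\Z$ ambiguity.
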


\begin{proof}
For every $a\in\R$, let $\Sigma_a$ denote the collection of
open intervals $\cI$ in~$\R$ that contain $a$.
For any fixed $a$ we must show that $f$ is one-to-one
on an interval $\cI\in\Sigma_a$.

In the case that $f(a)\ne 0$
we use a result of Levinson and Montgomery~\cite{LevMont}
which states that $\zeta(s)=0$ whenever $\zeta'(s)=0$ and
$\sigma=\tfrac12$.  As $f(a)\ne 0$ we have $f'(a)\ne 0$,
hence $f$ is locally invertible in a complex neighborhood of $a$;
in particular, $f$ is one-to-one on some interval $\cI\in\Sigma_a$.

Let $\sZ$ denote the set of all zeros of $f$.
If $t\not\in\sZ$, we define
$$
\vartheta(t):=\arg f(t)=\Im\log\zeta(\tfrac12+it)
$$
by continuous variation of the argument from $2$ to $2+it$ to $\tfrac12+it$,
starting with the value $0$, and we denote by $N(t)$
the number of zeros $\rho=\beta+i\gamma$ of $\zeta(s)$
in the rectangle $0<\beta<1$, $0<\gamma<t$.  Then
$$
\vartheta(t)=\pi\cdot(N(t)-1)+g(t)\qquad(t\not\in\sZ),
$$
where
$$
g(t):=-\arg\Gamma\(\frac14+\frac{it}{2}\)+\frac{t}{2}\log\pi\qquad(t\in\R);
$$
see, for example, Montgomery and Vaughan~\cite[Theorem~14.1]{MontVau}.
Then
\begin{equation}
\label{eq:gp}
g'(t)=-\frac{1}{2}\,\Re\,\frac{\Gamma'}{\Gamma}\(\frac 14+\frac{it}{2}\)
+\frac{1}{2}\log\pi\qquad(t\in\R),
\end{equation}
and from the well known relation $(\Gamma'/\Gamma)'(s)=\sum_{n=0}^\infty(n+s)^{-2}$
we see that
$$
g''(t)=-16t\sum_{n=0}^\infty\frac{4n+1}{((4n+1)^2+4t^2)^2}\qquad(t\in\R).
$$
Thus, if $\Theta:=6.2898\cdots$ is the unique positive root of
the function defined by the right side of \eqref{eq:gp},
it is easy to see that $g$ is strictly decreasing at any $t\in\R$ with $|t|>\Theta$.

We now consider the case that $f(a)=0$, i.e., $a\in\sZ$.  Since
$\sZ$ is a discrete subset of $\R$, there exists an interval
$\cI\in\Sigma_a$ for which $\cI\cap\sZ=\{a\}$.
Using a shorter interval~$\cI$, if necessary, and
noting that $|a|\ge 14.1347\cdots>\Theta$, we can also ensure
that $g$ is strictly decreasing on $\cI$, and that
$|g(t)-g(a)|<1$ for all $t\in\cI$.  In particular,
$|g(t_1)-g(t_2)|<2$ for all $t_1,t_2\in\cI$.

Now suppose that $f(t_1)=f(t_2)$ with $t_1,t_2\in\cI$.
If $f(t_1)=f(t_2)=0$, then $t_1=t_2$ ($=a$) since $\cI\cap\sZ=\{a\}$.
If $f(t_1)=f(t_2)\ne 0$ we have
$$
0=\vartheta(t_1)-\vartheta(t_2)=\pi(N(t_1)-N(t_2))+g(t_1)-g(t_2),
$$
and therefore $g(t_1)-g(t_2)\in(-2,2)\cap\pi\Z=\{0\}$, i.e., $g(t_1)=g(t_2)$.
Since $g$ is strictly decreasing on $\cI$, we again have $t_1=t_2$.
This shows that $f$ is one-to-one on $\cI$, and the proof is complete.
\end{proof}

\section{Analytic functions sharing a curve}
\label{sec:sharing}

As in \S\ref{sec:Riemann}, for every $a\in\R$ we use $\Sigma_a$ to
denote the collection of open real intervals that contain~$a$. 

For each $\cI\in\Sigma_0$
we put $\cI^+:=\cI\cap[0,\infty)$ and $\cI^-:=\cI\cap(-\infty,0]$.
Given a function $F$ defined in a complex neighborhood of zero, we say that
\begin{itemize}
\item \emph{the curve of $F$ bounces back at zero} if 
there are arbitrarily short intervals $\cI\in\Sigma_0$
such that $F(\cI^+)=F(\cI^-)$.
\end{itemize}

Next, given two functions $F,G$
defined in a complex neighborhood of zero, we say that
\begin{itemize}
\item $F,G$ \emph{share a curve around zero}
if there are arbitrarily short intervals $\cI,\cJ\in\Sigma_0$
for which $F(\cI)=G(\cJ)$;
\item $F,G$ \emph{roughly agree near zero}
if there are two sequences of \emph{nonzero} real numbers,
$(u_k)_{k=1}^\infty$ and $(v_k)_{k=1}^\infty$,
such that $u_k\to 0$ and $v_k\to 0$
as $k\to\infty$, and $F(u_k)=G(v_k)$ for all $k\ge 1$;
\item $F,G$ \emph{roughly agree to the right of zero}
if there are two sequences of \emph{positive} real numbers,
$(u_k)_{k=1}^\infty$ and $(v_k)_{k=1}^\infty$, such that
$u_k\to 0^+$ and $v_k\to 0^+$ as $k\to\infty$, and $F(u_k)=G(v_k)$ for all $k\ge 1$.
\end{itemize}

\begin{proposition}
\label{prop:mohonko}
Suppose that $F,G$ are non-constant and analytic in some
neighborhood of zero, and that neither curve bounces back at zero.
Then the following statements are equivalent:
\begin{itemize}

\item[$(i)$] $F,G$ roughly agree near zero;

\item[$(ii)$] $F,G$ share a curve around zero.
\end{itemize}
\end{proposition}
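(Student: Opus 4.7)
The implication $(ii)\Rightarrow(i)$ is straightforward: assuming there are arbitrarily short $\cI,\cJ\in\Sigma_0$ with $F(\cI)=G(\cJ)$, letting the interval lengths tend to $0$ forces $F(0)=G(0)$ by continuity. For each such small pair, choose $u\in\cI\setminus\{0\}$ with $F(u)\ne F(0)$ — possible for $\cI$ small enough, since the nonconstant analytic function $F-F(0)$ has $0$ as an isolated zero — and then pick $v\in\cJ$ with $G(v)=F(u)$; automatically $v\ne 0$ since $G(v)\ne G(0)$, and these sequences witness rough agreement.

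The core of the argument is $(i)\Rightarrow(ii)$. Starting from nonzero sequences $u_k,v_k\to 0$ in $\R$ with $F(u_k)=G(v_k)$, continuity gives $F(0)=G(0)$, which we normalize to $0$; write $F(z)=a_mz^m+O(z^{m+1})$ and $G(w)=b_nw^n+O(w^{n+1})$. The plan is to analyze the holomorphic function $\Phi(z,w):=F(z)-G(w)$ on a bidisk about $(0,0)\in\C^2$. Using local biholomorphisms $H,K$ near $0$ with $F-F(0)=H^m$ and $G-G(0)=K^n$, the zero set $\Phi^{-1}(0)$ is locally biholomorphic to $\{(\alpha,\beta):\alpha^m=\beta^n\}$; setting $d:=\gcd(m,n)$, $m':=m/d$, $n':=n/d$, this has exactly $d$ irreducible components through $(0,0)$, each admitting a Puiseux parametrization $\tau\mapsto(z(\tau),w(\tau))$ in which $z$ and $w$ have orders $n'$ and $m'$ at $\tau=0$. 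Since $(u_k,v_k)\in\Phi^{-1}(0)$ accumulates at $(0,0)$, by pigeonhole one component contains infinitely many of them, yielding $\tau_k\to 0$ in $\C$ with $z(\tau_k),w(\tau_k)\in\R$.

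The sets $\{\tau:\Im z(\tau)=0\}$ and $\{\tau:\Im w(\tau)=0\}$ are real-analytic in $\C\cong\R^2$, each decomposing near $0$ into $n'$ (respectively $m'$) smooth real-analytic branches through the origin, with angular tangent directions determined by the leading terms. Since their intersection contains the accumulating sequence $(\tau_k)$, and two distinct smooth real-analytic branches through $0$ can intersect only in a discrete set, a branch of the first must coincide with a branch of the second, producing a smooth real-analytic arc $\Gamma\subset\C$ through $0$ on which both $z$ and $w$ take real values. Reparametrizing $\Gamma$ by a real variable $s$ yields nonconstant real-analytic functions $U(s):=z(\tau(s))$ and $V(s):=w(\tau(s))$ with $U(0)=V(0)=0$ and $F(U(s))=G(V(s))$ for $s$ in a neighborhood of $0$; the Puiseux construction forces the orders $p'$ of $U$ and $q'$ of $V$ at $s=0$ to equal $n'$ and $m'$ respectively, so $\gcd(p',q')=1$ and in particular at most one of $p',q'$ is even.

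If both $p',q'$ are odd, then $U$ and $V$ are local homeomorphisms from a small interval $(-\delta,\delta)$ onto open intervals $\cI:=U((-\delta,\delta))$ and $\cJ:=V((-\delta,\delta))$ in $\Sigma_0$, so $F(\cI)=G(\cJ)$ is immediate; letting $\delta\to 0$ establishes $(ii)$. The main obstacle is eliminating the case where exactly one of $p',q'$ is even, and this is precisely where the no-bouncing-back hypothesis enters. Suppose $p'$ is even and $q'$ is odd: then $U$ folds $(-\delta,\delta)$ onto a one-sided interval while $V$ is a local homeomorphism, so for each $u$ in the image of $U$ the two preimages $s_\pm$ yield $v_\pm:=V(s_\pm)$ on opposite sides of $0$ (since $q'$ is odd) with $G(v_+)=F(u)=G(v_-)$. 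As $u$ varies, this produces a continuous bijective pairing $v_+\leftrightarrow v_-$ showing $G((0,v_+^*))=G((v_-^*,0))$, hence $G(\cJ^+)=G(\cJ^-)$ for the small (possibly asymmetric) interval $\cJ:=(v_-^*,v_+^*)\in\Sigma_0$, contradicting the hypothesis that $G$ does not bounce back at $0$. The symmetric case $p'$ odd, $q'$ even forces $F$ to bounce back. Hence only the both-odd case can occur, completing the proof.
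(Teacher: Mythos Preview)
Your argument is correct, and it reaches the same endgame as the paper (a parametric identity $F(U(s))=G(V(s))$ with $U,V$ real-valued of coprime orders at $0$, followed by a parity dichotomy that invokes the no-bounce-back hypothesis), but the route you take to that identity is genuinely different. The paper first reduces by a sign trick to the case where the sequences lie to the right of zero, then builds the identity by hand: it pre-composes $F$ and $G$ with suitable powers so that both acquire the same vanishing order $\ell=\lcm[m,n]$, extracts analytic $\ell$-th roots $F_3,G_3$ which are local biholomorphisms, and sets $H=F_3^{-1}\circ G_3$. An explicit estimate on the argument of the relevant $\ell$-th root of unity shows it must equal~$1$ for large $k$, whence $H(v_k^{1/n_1})=u_k^{1/m_1}\in\R$, and the identity principle makes $H$ real-analytic on a real interval. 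This yields $G(t^{n_1})=F(H(t)^{m_1})$ directly, and the parity of $m_1,n_1$ is then forced exactly as in your last paragraph. Your approach replaces this one-variable construction with the geometry of the analytic set $\Phi^{-1}(0)\subset\C^2$: you pass to an irreducible component, take a Puiseux uniformization $\tau\mapsto(z(\tau),w(\tau))$, and obtain the real parametrization by arguing that the real-analytic loci $\{\Im z(\tau)=0\}$ and $\{\Im w(\tau)=0\}$ must share a branch through~$0$. What you gain is a cleaner conceptual picture that avoids the root-of-unity computation and the preliminary reduction to one-sided sequences; what the paper's approach buys is that it stays entirely within one-variable function theory and needs no facts about singular real-analytic sets or branches of plane curves.
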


\medskip\noindent{\sc Remarks:} With a few modifications to the
proof given below, one can establish the more general statement:
If $F,G$ are non-constant and analytic in a neighborhood of zero
and roughly agree to the right (or left) of zero, then either
$F,G$ share a curve around zero, or at least one of the two curves bounces
back at zero.  After proving Proposition~\ref{prop:mohonko} 
we discovered that the result is essentially contained
in the (untranslated) work of Mohon$'$ko~\cite{Moho1,Moho2}, who
extended earlier results of Markushevich~\cite[Vol.\ III, Theorem 7.20]{Mark} from regular
analytic curves to arbitrary analytic curves.  Here, we present our
own proof for the sake of completeness and for the convenience of
the reader.

\medskip We begin with the observation that Proposition~\ref{prop:mohonko}
is a consequence of the following statement.

\begin{lemma}
\label{lem:main2}
If $F,G$ satisfy the hypotheses of Proposition~\ref{prop:mohonko}
and roughly agree to the right of zero, then
they share a curve around zero.
\end{lemma}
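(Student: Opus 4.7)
The plan is to analyze the analytic variety $\cV := \{(u,v) \in \C^2 : F(u) = G(v)\}$ near $(0,0)$, select a branch carrying the given coincidences on the positive real diagonal, normalize its parametrization, and use the no-bouncing-back hypothesis to force both exponent parameters to be odd, at which point open intervals around $0$ fall out.

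First I would reduce to $F(0) = G(0) = 0$ by continuity and observe that $\cV$ is the zero locus of the non-constant analytic function $F(u) - G(v)$. By the Puiseux--Weierstrass decomposition of plane analytic curves, $\cV$ near $(0,0)$ is a finite union of irreducible branches, each admitting an injective analytic parametrization $s \mapsto (U_j(s), V_j(s))$ on a disk. Pigeonhole on the finitely many branches selects one containing infinitely many of the points $(u_k, v_k)$. Via a conformal reparametrization I can normalize this branch to the form $U(s) = s^p$ with $p = \mathrm{ord}_0 U$ and $V(s) = \beta s^q (1 + O(s))$; a rotation $s \mapsto \omega s$ by a suitable $p$-th root of unity together with a subsequence extraction makes the parameters $s_k$ positive real. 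Applying the identity theorem to $V(s) - \overline{V(\bar s)}$, which vanishes on $\{s_k\}$, forces $V$ to send reals to reals near $0$, and in particular $\beta \in \R \setminus \{0\}$.

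The crucial step is the parity claim: \emph{under the hypothesis that neither $F$ nor $G$ bounces back, both $p$ and $q$ are odd.} As a prerequisite, no bouncing back of $G$ implies $G|_\R$ is injective in a neighborhood of $0$: any failure of real injectivity produces infinitely many distinct real pairs $(a_k, b_k) \to (0, 0)$ with $G(a_k) = G(b_k)$, and pigeonholing on the finitely many non-trivial branches of $\{G(a) = G(b)\}$ together with the identity theorem yields a local $\R$-preserving analytic involution $\sigma$ with $G \circ \sigma = G$ and $\sigma'(0) = -1$, which swaps $\R^+$ and $\R^-$ at $0$ and so makes $G$ bounce back. Granted this, $p$ even is impossible: injectivity of the branch parametrization would give $V(-s) \ne V(s)$ for small $s \ne 0$, but the branch identity applied at $s$ and $-s$ gives $F(s^p) = G(V(s)) = G(V(-s))$, contradicting injectivity of $G|_\R$. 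For $q$ even the argument is more delicate: then $V((-\delta, \delta)) \subseteq \{0\} \cup \R_+$, so $\psi := G^{-1} \circ F$ defines a continuous injection from $(-\delta^p, \delta^p)$ into $[0, \infty)$ with $\psi(0) = 0$; restricting $\psi$ to the two components $(-\delta^p, 0)$ and $(0, \delta^p)$ gives two continuous injections into $(0, \infty)$ whose images are connected subsets both accumulating at $0$, so they must share an initial interval $(0, \epsilon)$ and violate injectivity of $\psi$.

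With $p$ and $q$ both odd, $s \mapsto s^p$ is a bijection of $(-\delta, \delta)$ onto $\cI := (-\delta^p, \delta^p)$; and because $V'(s) = q \beta s^{q-1}(1 + O(s))$ has constant sign on $(-\delta, \delta) \setminus \{0\}$ when $q - 1$ is even, $V$ is strictly monotone on $(-\delta, \delta)$ and maps it bijectively onto an open interval $\cJ \ni 0$. The branch identity $F(U(s)) = G(V(s))$ then gives $F(\cI) = G(\cJ)$ with both intervals open around $0$ and of arbitrarily small length as $\delta \to 0$, which is precisely the required shared curve around zero. I expect the main obstacle to be the parity claim --- especially the $q$-even case, which rests on the topological observation that no continuous injection from an open real interval into $[0, \infty)$ can send an interior point of the interval to $0$ --- together with the preliminary injectivity lemma, which itself requires the Puiseux decomposition of $\{G(a) = G(b)\}$ and careful tracking of the reality of the involution.
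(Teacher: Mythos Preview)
Your argument is correct, but it follows a genuinely different route from the paper's. The paper works entirely by hand: writing $F(z)=z^m+\cdots$ and $G(z)=b_nz^n+\cdots$, it substitutes $z\mapsto z^{\ell/m}$ and $z\mapsto z^{\ell/n}$ (with $\ell=\lcm[m,n]$) to equalize the leading orders, then extracts explicit $\ell$-th roots $F_3,G_3$ with nonzero derivative at $0$ and sets $H:=F_3^{-1}\circ G_3$. Positivity of the $u_k,v_k$ forces the root-of-unity ambiguity to be trivial, the identity principle makes $H$ real on an interval, and one obtains $G(t^{n_1})=F(H(t)^{m_1})$; from this identity, an even $n_1$ (or $m_1$) produces bouncing back of $F$ (or $G$) \emph{directly}, with no appeal to local real injectivity. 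Your approach instead invokes the Puiseux decomposition of $\{F(u)=G(v)\}$ to find an injective branch $s\mapsto(s^p,V(s))$ carrying the coincidences, and then rules out even $p,q$ by first proving the auxiliary lemma that ``no bouncing back $\Rightarrow$ locally injective on $\R$'' (itself via Puiseux on $\{G(a)=G(b)\}$). The trade-off is clear: the paper's argument is more elementary and self-contained, avoiding any black-box curve theory, while yours is more geometric and makes the underlying picture (a real branch of the coincidence variety) transparent; on the other hand, your parity step is less direct, since you must pass through the injectivity lemma rather than reading off $F(\cL^+)=F(\cL^-)$ from an explicit even power as the paper does.
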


Indeed, suppose the lemma has already been established.
The implication $(ii)\Rightarrow(i)$ of Proposition~\ref{prop:mohonko} being clear,
we need only show that $(i)\Rightarrow(ii)$. To this end,
let $\sU:=(u_k)_{k=1}^\infty$
and $\sV:=(v_k)_{k=1}^\infty$ be sequences of nonzero
real numbers such that $u_k\to 0$ and $v_k\to 0$ as $k\to\infty$,
and $F(u_k)=G(v_k)$ for all $k\ge 1$.  Replacing $\sU$ with a
subsequence, if necessary, we can assume that all of the numbers
in $\sU$ are of the same sign, and likewise for $\sV$;
that is, for some $\eps_\sU,\eps_\sV\in\{\pm 1\}$ we
have $u_k=\eps_\sU|u_k|$ and $v_k=\eps_\sV|v_k|$ for all $k\ge 1$.
Now, the functions $F_1,G_1$ defined by
$$
F_1(z):=F(\eps_\sU z)\mand G_1(z):=G(\eps_\sV z)
$$
are non-constant and analytic in a neighborhood of zero,
and neither the curve of $F_1$ nor that of $G_1$
bounces back at zero.  Also, $F_1,G_1$ roughly agree to the right
of zero since $|u_k|\to 0^+$ and $|v_k|\to 0^+$ as $k\to\infty$,
and
$$
F_1(|u_k|)=F(u_k)=G(v_k)=G_1(|v_k|)\qquad(k\ge 1).
$$
By Lemma~\ref{lem:main2}, it follows that $F_1,G_1$
share a curve around zero.  Hence, there are arbitrarily short intervals
$\cI,\cJ\in\Sigma_0$ such that
$$
F(\eps_\sU\cI)=F_1(\cI)=G_1(\cJ)=G(\eps_\sV\cJ).
$$
Since the intervals $\eps_\sU\cI,\eps_\sV\cJ$ lie in $\Sigma_0$ and are
arbitrarily short, it follows that $F,G$ share a curve around zero.
Thus, $(i)\Rightarrow(ii)$ as required.

\begin{proof}[Proof of Lemma~\ref{lem:main2}]
Let $(u_k)_{k=1}^\infty$ and $(v_k)_{k=1}^\infty$ be two sequences of positive
real numbers tending to zero such that $F(u_k)=G(v_k)$ for all $k\ge 1$.
Clearly, $F(0)=G(0)$, and we can assume $F(0)=G(0)=0$ without
loss of generality.  Since the functions $F,G$ are non-constant and
analytic, we can write
$$
F(z)=a_mz^m+a_{m+1}z^{m+1}+\cdots\mand
G(z)=b_nz^n+b_{n+1}z^{n+1}+\cdots
$$
for all $z$ in a neighborhood of zero, where
$m,n\ge 1$  and $a_mb_n\ne 0$.
As neither $F$ nor $G$ is identically zero, we have $F(u_k)=G(v_k)\ne 0$ for all large $k$.
Noting that $v_k^n/u_k^m>0$ and
$$
\frac{b_n}{a_m}
\frac{v_k^n}{u_k^m}=
\frac{F(u_k)}{a_mu_k^m}\frac{b_nv_k^n}{G(v_k)}
=\frac{1+O(u_k)}{1+O(v_k)}\to 1\qquad(k\to\infty),
$$
it follows that $b_n/a_m>0$; hence, replacing $F,G$ with 
$a_m^{-1}F$, $a_m^{-1}G$ we can assume without loss of generality
that $a_m=1$ and $b_n>0$.

Now put
\begin{equation}
\label{eq:m1n1def}
\ell:=\lcm[m,n],\qquad m_1:=\frac{\ell}{m}\,,\qquad n_1:=\frac{\ell}{n}\,,
\end{equation}
and write
\begin{align*}
F_1(z)&:=F(z^{m_1})=z^\ell+\cdots=z^\ell(1+F_2(z)),\\
G_1(z)&:=G(z^{n_1})=b_nz^\ell+\cdots=(c_nz)^\ell(1+G_2(z)),
\end{align*}
where $F_2,G_2$ are analytic in a neighborhood of zero,
with $F_2(0)=G_2(0)=0$, and $c_n=b_n^{1/\ell}>0$. The functions
\begin{align*}
F_3(z)&:=z\cdot\exp\big(\ell^{-1}\log(1+F_2(z))\big),\\
G_3(z)&:=c_n z\cdot\exp\big(\ell^{-1}\log(1+G_2(z))\big),
\end{align*}
are well-defined and analytic near zero, where $\log$ denotes the principal branch
of the logarithm, and since $F'_3(0)=1\ne 0$ and $G'_3(0)=c_n\ne 0$ these functions
are invertible near zero.  Therefore, we can define $H=F_3^{-1}\circ G_3$ as
an (invertible) analytic function in some neighborhood of zero.  Now,
since $F_3^\ell=F_1$ and $G_3^\ell=G_1$, for all large $k$ we have
$$
F_3(u_k^{1/m_1})^\ell=F_1(u_k^{1/m_1})=F(u_k)
=G(v_k)=G_1(v_k^{1/n_1})=G_3(v_k^{1/n_1})^\ell,
$$
and thus $F_3(u_k^{1/m_1})=\xi_k\cdot G_3(v_k^{1/n_1})$ holds with some
$\ell$-th root of unity $\xi_k$.  On the other hand, for all large $k$ the
relation
$$
\frac{F_3(u_k^{1/m_1})}{G_3(v_k^{1/n_1})}
=\frac{u_k^{1/m_1}}{c_nv_k^{1/n_1}}\exp\(\ell^{-1}
\log\(\frac{1+F_2(u_k^{1/m_1})}{1+G_2(v_k^{1/n_1})}\)\)
$$
implies that
$$
\arg\xi_k=(i\,\ell)^{-1}\log\(\frac{1+F_2(u_k^{1/m_1})}{\big|1+F_2(u_k^{1/m_1})\big|}\cdot
\frac{\big|1+G_2(v_k^{1/n_1})\big|}{1+G_2(v_k^{1/n_1})}\)\to 0
\qquad(k\to\infty);
$$
thus, for all sufficiently large $k$ we must have $\xi_k=1$,
$F_3(u_k^{1/m_1})=G_3(v_k^{1/n_1})$, and $H(v_k^{1/n_1})=u_k^{1/m_1}$,
and it follows that $H$ defines a real-valued
invertible real analytic function on some interval
$\cK\in\Sigma_0$.\footnote{To see why $H$ is real-valued on some
in $\Sigma_0$, observe
that $h(z):=H(z)-\overline{H(\overline{z})}$ is analytic near zero,
$h(v_k^{1/n_1})=0$ for all large $k$ since $u_k,v_k\in\R$, and
$v_k^{1/n_1}\to 0$ as $k\to\infty$, hence by the principle of analytic continuation
$h$ must vanish identically near zero.}

From the relation
$$
G_3(t)=F_3(H(t))\qquad(t\in\cK),
$$
we see that
\begin{equation}
\label{eq:gfreln}
G(t^{n_1})=G_1(t)=F_1(H(t))=F(H(t)^{m_1})\qquad(t\in\cK).
\end{equation}
We claim that $n_1$ is odd.  Indeed, suppose on the contrary
that $n_1$ is even.  From the definition \eqref{eq:m1n1def}
it follows that $\gcd(m_1,n_1)=1$, hence $m_1$ is odd in this case.
Since $H$ is real-valued and invertible, $H(0)=0$, and $H$ maps
$v_k^{1/n_1}$ to $u_k^{1/m_1}>0$ if $k$ is large, we have
$H(\cK^+)\subseteq[0,\infty)$ and $H(\cK^-)\subseteq(-\infty,0]$.
Thus, if $\eps>0$ is small enough so that the interval
$\cL:=(H(-\eps)^{m_1},H(\eps)^{m_1})\in\Sigma_0$
is contained in $\cK$, then $F(\cL^+)=F(\cL^-)$ since by \eqref{eq:gfreln}:
$$
F(H(t)^{m_1})=G(t^{n_1})=G((-t)^{n_1})=F(H(-t)^{m_1})\qquad(t\in[0,\eps)).
$$
But this means that the curve of $F$ bounces back at zero, which
contradicts our original hypothesis on $F$.  This contradiction
establishes our claim that $n_1$ is odd.  A similar argument shows
that $m_1$ is also odd.

Since $m_1$ and $n_1$ are both odd, the intervals
$\cI:=H(\cK)^{m_1}$, $\cJ:=\cK^{n_1}$, are both open and contain zero,
thus $\cI,\cJ\in\Sigma_0$. By \eqref{eq:gfreln} we have
$F(\cI)=G(\cJ)$.  Replacing $\cK$ by an arbitrarily short interval in $\Sigma_0$,
the intervals $\cI,\cJ$ become arbitrarily short as well, and therefore
$F,G$ share a curve around zero.
\end{proof}

The ideas presented above can be adapted as follows.
Given $a,b\in\R$ and a function $F$ defined in some
complex neighborhoods of $a$ and $b$, we say that
\begin{itemize}
\item \emph{$F$ shares curves around $a,b$} if there exist 
arbitrarily short intervals $\cI\in\Sigma_a$ and $\cJ\in\Sigma_b$
such that $F(\cI)=F(\cJ)$.
\end{itemize}
This is equivalent to the statement that $F_a,F_b$ share a curve around zero,
where for each $a\in\R$ we denote by $F_a$ the function given by $F_a(z)=F(z+a)$.
When viewed as a function of a real variable, if $F$ is one-to-one locally at
$a$ and $b$, then the curves of $F_a,F_b$ cannot bounce back at zero; hence,
the next statement is an easy consequence of Proposition~\ref{prop:mohonko}.

\begin{lemma}
\label{lem:share}
Fix $a,b\in\R$. Suppose that $F$ is analytic in
some neighborhoods of $a$ and $b$, and that there are
intervals in both $\Sigma_a$ and $\Sigma_b$ on which $F$ is
one-to-one.  Then the following statements are equivalent:
\begin{itemize}

\item[$(i)$] $F$ shares curves around $a,b$;

\item[$(ii)$] there are sequences $(u_k)_{k=1}^\infty$
and $(v_k)_{k=1}^\infty$ such that

\subitem $\circ~~~$ $u_k\ne a$ and $v_k\ne b$ for all $k\ge 1;$

\subitem $\circ~~~$ $u_k\to a$ and $v_k\to b$ as $k\to\infty;$

\subitem $\circ~~~$  $F(u_k)=F(v_k)$ for all $k\ge 1$.

\end{itemize}
\end{lemma}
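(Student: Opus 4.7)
The plan is to reduce the lemma directly to Proposition~\ref{prop:mohonko} by translating the two points $a,b$ to the origin. For each real number $c$, let $F_c$ denote the shift $F_c(z):=F(z+c)$, which is analytic in a neighborhood of zero whenever $F$ is analytic in a neighborhood of $c$. The equivalences I need all follow from straightforward changes of variable:
\begin{itemize}
\item $F$ shares curves around $a,b$ iff $F_a,F_b$ share a curve around zero, because $F(a+\cI)=F(b+\cJ)$ is the same relation as $F_a(\cI)=F_b(\cJ)$, and $\cI\in\Sigma_0$ iff $a+\cI\in\Sigma_a$, and similarly for $b$;
\item the existence of sequences $u_k\to a$, $v_k\to b$ with $u_k\ne a$, $v_k\ne b$ and $F(u_k)=F(v_k)$ is, after setting $s_k:=u_k-a$ and $t_k:=v_k-b$, precisely the statement that $F_a,F_b$ roughly agree near zero.
\end{itemize}

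Thus Proposition~\ref{prop:mohonko}, applied to the pair $F_a,F_b$, gives exactly the equivalence $(i)\Leftrightarrow(ii)$ of the lemma, provided its hypotheses are in force. So I would check that $F_a$ and $F_b$ are non-constant analytic functions in a neighborhood of zero whose curves do not bounce back at zero. Non-constancy is immediate from the hypothesis that $F$ is one-to-one on some interval in $\Sigma_a$ and on some interval in $\Sigma_b$, since a constant function is not one-to-one on any non-trivial interval.

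The one point that requires a brief argument (and is the only thing beyond formal translation) is ruling out the bouncing-back behavior. Suppose for contradiction that the curve of $F_a$ bounces back at zero. Then there exist arbitrarily short $\cI\in\Sigma_0$ with $F_a(\cI^+)=F_a(\cI^-)$; in particular, for any such $\cI$ contained in the interval of injectivity of $F_a$ near $0$, and any $t\in\cI^+$ with $t>0$, there exists $s\in\cI^-$ with $s\le 0$ and $F_a(t)=F_a(s)$. Since $t>0\ge s$, we have $t\ne s$, contradicting the local injectivity of $F_a$ at zero (which is inherited from that of $F$ at $a$). The same argument applies to $F_b$. Hence both hypotheses of Proposition~\ref{prop:mohonko} hold, and the lemma follows.
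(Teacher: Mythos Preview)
Your proof is correct and follows exactly the route the paper indicates: translate $a,b$ to the origin via $F_a,F_b$, note that local injectivity of $F$ at $a,b$ forces $F_a,F_b$ to be non-constant and prevents their curves from bouncing back at zero, and then invoke Proposition~\ref{prop:mohonko}. The paper treats this lemma as an immediate consequence of Proposition~\ref{prop:mohonko} without a formal proof, so your write-up simply spells out the details the paper leaves implicit.
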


For real numbers $\alpha<\beta$ we now assume that $F$
is analytic in a complex neighborhood of the interval $[\alpha,\beta]$,
and that $F$ is \emph{locally injective} at every point of $[\alpha,\beta]$,
i.e., for any $a\in[\alpha,\beta]$ there is an interval in $\Sigma_a$ on
which $F$ is one-to-one.  By a \emph{self-intersection of $F$ on $[\alpha,\beta]$}
we mean an element of
$$
\sS_{\alpha,\beta}:=\{(a,b)\in[\alpha,\beta]^2:a\ne b\text{~and~}F(a)=F(b)\}.
$$
Our goal is to understand the structure of this set.

\begin{lemma}
\label{lem:Scompact}
The set $\sS_{\alpha,\beta}$ is compact.
\end{lemma}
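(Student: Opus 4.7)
The plan is to observe that $\sS_{\alpha,\beta}$ is a subset of the compact square $[\alpha,\beta]^2$, so it suffices to show that $\sS_{\alpha,\beta}$ is closed in $\R^2$. I will do this by a standard sequential argument, using continuity of $F$ to preserve the relation $F(a)=F(b)$ in the limit, and using the local injectivity hypothesis to rule out collapse onto the diagonal.

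Concretely, let $(a_k,b_k)_{k=1}^\infty$ be any sequence in $\sS_{\alpha,\beta}$ converging to some $(a,b)\in\R^2$. Since $[\alpha,\beta]^2$ is closed we have $(a,b)\in[\alpha,\beta]^2$, and since $F$ is continuous on $[\alpha,\beta]$ the equalities $F(a_k)=F(b_k)$ pass to the limit to give $F(a)=F(b)$. The only thing requiring verification is that $a\ne b$.

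Here the one substantive step is to eliminate the possibility that the self-intersections $(a_k,b_k)$ accumulate on the diagonal. Suppose toward a contradiction that $a=b$. Since $F$ is locally injective on $[\alpha,\beta]$, there exists an interval $\cI\in\Sigma_a$ on which $F$ is one-to-one. Because $a_k\to a$ and $b_k\to a$, for all sufficiently large $k$ we have $a_k,b_k\in\cI$; but then $F(a_k)=F(b_k)$ together with injectivity on $\cI$ forces $a_k=b_k$, contradicting $(a_k,b_k)\in\sS_{\alpha,\beta}$. Therefore $a\ne b$, and we conclude that $(a,b)\in\sS_{\alpha,\beta}$, so $\sS_{\alpha,\beta}$ is closed and hence compact.

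The only potential obstacle is the diagonal issue just addressed; once local injectivity is invoked, the argument is routine. Note that it is crucial that local injectivity holds at every point of $[\alpha,\beta]$, including possible limits of self-intersections, which is exactly what the standing hypothesis provides.
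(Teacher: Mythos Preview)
Your proof is correct and follows essentially the same approach as the paper's own argument: reduce compactness to closedness in $[\alpha,\beta]^2$, pass the equality $F(a_k)=F(b_k)$ to the limit by continuity, and use local injectivity at the limit point to rule out accumulation on the diagonal. The only difference is notational.
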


\begin{proof}
Since $[\alpha,\beta]^2$ is compact, it suffices to show that $\sS_{\alpha,\beta}$ is closed.

Let $(a,b)\in[\alpha,\beta]^2$ and suppose that
$(u_k,v_k)\to(a,b)$ as $k\to\infty$,
where $(u_k,v_k)\in\sS_{\alpha,\beta}$ for all $k\ge 1$.
Since $F(u_k)=F(v_k)$ for each $k$,
the continuity of~$F$ implies that $F(a)=F(b)$.

Assume that $a=b$.  Since $F$ is locally injective at $a$,
there is an interval $\cI\in\Sigma_a$ on which $F$ is
one-to-one.  On the other hand, for all large $k$
we have $u_k,v_k\in\cI$, $u_k\ne v_k$,
and $F(u_k)=F(v_k)$, which shows that $F$ is not one-to-one on $\cI$.
The contradiction implies that $a\ne b$, hence $(a,b)\in\sS_{\alpha,\beta}$.
\end{proof}

We now express $\sS_{\alpha,\beta}$ as a disjoint union
$\sS_{\alpha,\beta}^*\mathop{\cup}\limits^{\textbf{.}}\sS_{\alpha,\beta}^\circ$,
where $\sS_{\alpha,\beta}^*$ is the set of \emph{limit points} of $\sS_{\alpha,\beta}$,
and  $\sS_{\alpha,\beta}^\circ$ is the set of
\emph{isolated points} in $\sS_{\alpha,\beta}$. The next lemma
provides a useful characterization of the set $\sS_{\alpha,\beta}^*$.

\begin{lemma}
We have
\begin{equation}
\label{eq:businesscards}
\sS_{\alpha,\beta}^*=\{(a,b)\in[\alpha,\beta]^2:a\ne b
\text{~and $F$ shares curves around $a,b$}\}.
\end{equation}
\end{lemma}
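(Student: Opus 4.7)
The plan is to establish the claimed equality by proving the two inclusions separately, with Lemma \ref{lem:share} serving as the main bridge in each direction between the topological notion of a limit point and the geometric notion of sharing curves.

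For the inclusion $\sS_{\alpha,\beta}^* \subseteq \{(a,b)\in[\alpha,\beta]^2 : a\ne b \text{~and $F$ shares curves around $a,b$}\}$, I would take $(a,b)\in\sS_{\alpha,\beta}^*$. Since $\sS_{\alpha,\beta}$ is closed by Lemma \ref{lem:Scompact}, $(a,b)\in\sS_{\alpha,\beta}$, so in particular $a\ne b$. As $(a,b)$ is a limit point, there is a sequence $(u_k,v_k)\in\sS_{\alpha,\beta}\setminus\{(a,b)\}$ with $(u_k,v_k)\to(a,b)$. The crucial step is to refine this sequence so that $u_k\ne a$ and $v_k\ne b$ for all $k$: if $u_k=a$ holds along some subsequence, then for those indices $v_k\ne b$ (otherwise $(u_k,v_k)=(a,b)$), yet $F(v_k)=F(a)=F(b)$ and $v_k\to b$, which contradicts the local injectivity of $F$ at $b$; a symmetric argument rules out $v_k=b$. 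With this refined sequence, Lemma \ref{lem:share} immediately yields that $F$ shares curves around $a,b$.

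For the reverse inclusion, let $(a,b)\in[\alpha,\beta]^2$ with $a\ne b$ and $F$ sharing curves around $a,b$. Lemma \ref{lem:share} supplies sequences $(u_k),(v_k)$ with $u_k\ne a$, $v_k\ne b$, $u_k\to a$, $v_k\to b$, and $F(u_k)=F(v_k)$. For all sufficiently large $k$ one has $u_k\ne v_k$ (since $a\ne b$); moreover, when $a,b$ lie in the interior of $[\alpha,\beta]$ one has $(u_k,v_k)\in[\alpha,\beta]^2$ automatically, while in boundary cases I would exploit the freedom in the choice of the sharing intervals $\cI,\cJ$ to select sequences on the sides lying inside $[\alpha,\beta]$. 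Hence $(u_k,v_k)\in\sS_{\alpha,\beta}$ for large $k$ and $(u_k,v_k)\to(a,b)$ with $(u_k,v_k)\ne(a,b)$, which shows $(a,b)\in\sS_{\alpha,\beta}^*$.

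The main obstacle I expect is the subsequence refinement in the forward direction---guaranteeing that $u_k\ne a$ and $v_k\ne b$ so that Lemma \ref{lem:share} becomes applicable. Once this step is in place via local injectivity, the rest of the argument reduces to formal manipulation of the definitions together with a direct appeal to Lemma \ref{lem:share}.
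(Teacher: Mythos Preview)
Your proposal is correct and follows essentially the same approach as the paper: both directions are reduced to Lemma~\ref{lem:share}, and the key refinement step (ensuring $u_k\ne a$, $v_k\ne b$) is handled by exactly the dichotomy you describe. The only cosmetic difference is that where you invoke local injectivity at $b$ directly to rule out a subsequence with $u_k=a$, the paper phrases the same obstruction as ``$F$ would be constant by analyticity''; your version is in fact slightly cleaner, and your explicit attention to the boundary case in the reverse inclusion goes beyond what the paper writes (it simply declares that inclusion ``clear'').
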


\begin{proof}
Let $\sT$ denote the set on the right side of \eqref{eq:businesscards}. In view
of Lemma~\ref{lem:share}, the inclusion $\sT\subseteq\sS_{\alpha,\beta}^*$ is
clear. On the other hand, if $(a,b)$ is a limit point of $\sS_{\alpha,\beta}$
that does not lie in $\sT$, then one of the following statements is true:
\begin{itemize}
\item[$(i)$] there is a sequence $(u_k)_{k=1}^\infty$
such that $u_k\ne a$ for all $k\ge 1$,
$u_k\to a$ as $k\to\infty$, and $F(u_k)=F(b)$ for all $k\ge 1$;

\item[$(ii)$] there is a sequence $(v_k)_{k=1}^\infty$
such that $v_k\ne b$ for all $k\ge 1$,
$v_k\to b$ as $k\to\infty$, and $F(a)=F(v_k)$ for all $k\ge 1$.
\end{itemize}
However, since $F$ is analytic in a  neighborhood of $[\alpha,\beta]$,
either statement implies that $F$ is constant on the same neighborhood,
contradicting the local injectivity of $F$ on $[\alpha,\beta]$.  This shows that
$\sS_{\alpha,\beta}^*\setminus\sT=\varnothing$, and \eqref{eq:businesscards}
follows.
\end{proof}

\begin{lemma}
\label{lem:twisty}
For every $(a,b)\in\sS_{\alpha,\beta}^*$ there are intervals $\cI\in\Sigma_a$, $\cJ\in\Sigma_b$, 
and a continuous bijection $\phi_{a,b}:\cI\to\cJ$ such that
$\phi_{a,b}(a)=b$, and $(t,\phi_{a,b}(t))\in\sS_{\alpha,\beta}^*$ for all $t\in\cI$.  Moreover,
$\phi_{a,b}$ is a strictly increasing function of $t$.
\end{lemma}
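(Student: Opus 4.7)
My plan is to realize $\phi_{a,b}$ as the composition $(F|_\cJ)^{-1}\circ F|_\cI$ for suitably short intervals $\cI\in\Sigma_a$ and $\cJ\in\Sigma_b$, and then to read each of the listed properties off from this explicit description. First I would invoke the previous lemma, which identifies $\sS_{\alpha,\beta}^*$ with the set of pairs $(a,b)$, $a\ne b$, around which $F$ shares curves; so for every $\eps>0$ there exist $\cI\in\Sigma_a$ and $\cJ\in\Sigma_b$ of length less than $\eps$ with $F(\cI)=F(\cJ)$. Using the local injectivity of $F$ at $a$ and at $b$, together with $a\ne b$, I would shrink $\cI$ and $\cJ$ further so that $F$ is one-to-one on each of them and so that $\cI\cap\cJ=\varnothing$.

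With that setup in place, both $F|_\cI$ and $F|_\cJ$ are continuous bijections onto the common image $F(\cI)=F(\cJ)$, and their inverses are continuous (the complex inverse function theorem handles points where $F'\ne 0$, while an odd-root real parameterization handles any isolated critical points that survive local injectivity). Hence $\phi_{a,b}:=(F|_\cJ)^{-1}\circ F|_\cI$ is a continuous bijection $\cI\to\cJ$, and $F(a)=F(b)$ combined with injectivity of $F|_\cJ$ on $\cJ\ni b$ forces $\phi_{a,b}(a)=b$. For every $t\in\cI$, the equality $F(t)=F(\phi_{a,b}(t))$ holds by construction and $t\ne\phi_{a,b}(t)$ because $\cI\cap\cJ=\varnothing$, so $(t,\phi_{a,b}(t))\in\sS_{\alpha,\beta}$; moreover $t\mapsto(t,\phi_{a,b}(t))$ is a continuous injection from the open interval $\cI$, so every point of its image is the limit of nearby distinct points on the same curve and therefore lies in $\sS_{\alpha,\beta}^*$.

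The monotonicity of $\phi_{a,b}$ is automatic, since any continuous bijection between open intervals of $\R$ is strictly monotonic by an elementary intermediate value argument. To exclude the strictly decreasing case I would assume without loss of generality that $a<b$ and argue by contradiction: the Mohon$'$ko-type construction in the proof of Lemma~\ref{lem:main2} shows that $\phi_{a,b}$ is actually real-analytic and invertible, and its graph continues analytically along the connected component of $\sS_{\alpha,\beta}^*$ that contains $(a,b)$. If this branch were strictly decreasing, then on continuation the graph would be forced toward the diagonal; since $\sS_{\alpha,\beta}^*$ is closed in the compact square $[\alpha,\beta]^2$, the limit would deposit a diagonal point $(t^*,t^*)$ in $\sS_{\alpha,\beta}^*$, but this would contradict local injectivity of $F$ at $t^*$ (as arbitrarily near $(t^*,t^*)$ there would be pairs $(u,v)\in\sS_{\alpha,\beta}$ with distinct $u,v$ both close to $t^*$, preventing $F$ from being one-to-one on any interval around $t^*$). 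The main obstacle is making this globalization rigorous --- in particular, showing that the continued graph really does reach the diagonal rather than escape through the boundary $\partial([\alpha,\beta]^2)$ --- which hinges on a careful study of the extension of $\phi_{a,b}$ along the curve structure of $\sS_{\alpha,\beta}^*$.
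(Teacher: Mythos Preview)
Your construction of $\phi_{a,b}$ as $(F|_\cJ)^{-1}\circ F|_\cI$ and the verification of the first statement are exactly what the paper does. The divergence is in the monotonicity argument, and here you have identified but not closed a genuine gap.

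The paper avoids your globalization problem entirely by a compactness trick. Define
\[
\sS_{\alpha,\beta}':=\{(a,b)\in\sS_{\alpha,\beta}^*:\phi_{a,b}\text{ is strictly decreasing}\}.
\]
One checks that $\sS_{\alpha,\beta}'$ is closed in $\sS_{\alpha,\beta}^*$ (a limit of decreasing local branches is again decreasing, since near any limit point you can compare with the already-constructed local $\phi$), hence compact by Lemma~\ref{lem:Scompact}. If $\sS_{\alpha,\beta}'\ne\varnothing$, pick $(a,b)\in\sS_{\alpha,\beta}'$ \emph{minimizing} $|a-b|$, say with $a<b$. For $u\in\cI$ with $u>a$ and $v:=\phi_{a,b}(u)<b$, the pair $(u,v)$ again lies in $\sS_{\alpha,\beta}'$ but has $|u-v|<|a-b|$, a contradiction. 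No continuation, no boundary analysis, no appeal to the Mohon$'$ko construction or real-analyticity of $\phi_{a,b}$ is needed.

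Your instinct that a decreasing branch is ``forced toward the diagonal'' is exactly right, and in fact your boundary worry is less severe than you fear: as long as $u<v$ along the continued branch one has $\alpha\le a<u<v<b\le\beta$, so neither coordinate can exit $[\alpha,\beta]$ before the diagonal is reached. But turning this into a rigorous argument still requires patching together the local maps $\phi_{u,v}$ and controlling the limit, whereas the minimum-distance trick delivers the contradiction in one step.
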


\begin{proof}
For any $(a,b)\in\sS_{\alpha,\beta}^*$, there are arbitrarily short
open intervals $\cI\in\Sigma_a$ and $\cJ\in\Sigma_b$ such
that $F(\cI)=F(\cJ)$.  Since $a\ne b$ we can assume $\cI\cap\cJ=\varnothing$.
Since $F$ is locally injective, we can further assume $F$ is one-to-one
on $\cI$ and on~$\cJ$. Then $\phi_{a,b}$ is the map obtained via the composition
$$
\cI~~\mathop{\xrightarrow{\hspace*{0.9cm}}}\limits^{F\vert_\cI}~~F(\cI)=F(\cJ)
~~\mathop{\xrightarrow{\hspace*{0.9cm}}}\limits^{(F\vert_\cJ)^{-1}}~~\cJ
$$
Clearly, $(t,\phi_{a,b}(t))\in\sS_{\alpha,\beta}$ for all $t\in\cI$, and as
$\cI$ is open and $\phi_{a,b}$ is continuous,
every such $(t,\phi_{a,b}(t))$ is a limit point of $\sS_{\alpha,\beta}$.
This proves the first statement.

Observe that, since $F$ is one-to-one on $\cI$ and $\phi_{a,b}$ is continuous,
it follows that $\phi_{a,b}$ is either strictly increasing or strictly decreasing
on its interval of definition. Thus, to finish the proof it suffices to
show that the set
$$
\sS_{\alpha,\beta}':=\{(a,b)\in\sS_{\alpha,\beta}^*:
\phi_{a,b}\text{~is strictly decreasing on $\cI$}\}
$$
is empty. First, we claim that $\sS_{\alpha,\beta}'$ is closed,
hence compact by Lemma~\ref{lem:Scompact}.  Indeed,
if $(a,b)$ is any limit point of $\sS_{\alpha,\beta}'$, then $(a,b)$ lies in
the \emph{closed} set $\sS_{\alpha,\beta}^*$. Let $\cI,\cJ,\phi_{a,b}$ be defined
as before.  Since $(a,b)$ is a limit point, we can find $(u,v)\in\sS_{\alpha,\beta}'$
for which $u\in\cI$, $v\in\cJ$.  As $\phi_{u,v}$ is strictly decreasing
near $u$, the same is true of $\phi_{a,b}$, hence $\phi_{a,b}$ is
strictly decreasing on $\cI$.  This implies that $(a,b)\in\sS_{\alpha,\beta}'$,
and the claim follows.

Assume that $\sS_{\alpha,\beta}'\ne\varnothing$. Since $\sS_{\alpha,\beta}'$ is compact,
there exists $(a,b)\in\sS_{\alpha,\beta}'$ for which
\begin{equation}
\label{eq:easybutton}
|a-b|=\min\limits_{(u,v)\in\sS_{\alpha,\beta}'}|u-v|\ne 0.
\end{equation}
Interchanging $a$ and $b$, if necessary, we can assume that
$a<b$. Let $\cI,\cJ,\phi_{a,b}$ be defined as before.  Since $\phi_{a,b}$
is strictly decreasing on $\cI$, using the first statement of the lemma
it is easy to see that $(t,\phi_{a,b}(t))$ lies in $\sS_{\alpha,\beta}'$ for all
$t\in\cI$; in particular, if $u\in\cI$, $u>a$, and $v:=\phi_{a,b}(u)$,
then $(u,v)\in\sS_{\alpha,\beta}'$ with $v<b$.  But then we have
$|u-v|<|a-b|$, which contradicts \eqref{eq:easybutton}.
Hence, it must be the case that $\sS_{\alpha,\beta}'=\varnothing$,
and we are done.
\end{proof}

Given $a,b\in[\alpha,\beta]$, we write $a\cong b$ and say that
\emph{$a$ is $F$-equivalent to~$b$} if $a=b$ or $(a,b)\in\sS_{\alpha,\beta}^*$;
otherwise, we say that \emph{$a$ and $b$ are $F$-inequivalent}.
Using Lemma~\ref{lem:share} one verifies that this defines
an equivalence relation on $[\alpha,\beta]$.
We also write $a\rightsquigarrow b$ and say that \emph{$a$ is the immediate
$F$-predecessor of $b$} if $a<b$, $a\cong b$, and there is no
number $c$ such that $a<c<b$ and $c\cong a$.

It is important to note that if $a\cong b$ and $a<b$, then $b$
has an immediate $F$-predecessor $c$ that lies in the interval $[a,b)$;
this follows from the fact that the numbers in a fixed \text{$F$-equivalence}
class are isolated since $F$ is analytic and locally injective.  

\begin{proposition}
\label{prop:funddomain}
Suppose that $\gamma,\delta$ are $F$-equivalent numbers in $[\alpha,\beta]$,
and that $\gamma$ is the immediate $F$-predecessor of $\delta$. Then
\begin{itemize}
\item[$(i)$] every number in $[\alpha,\beta]$ is $F$-equivalent to some number
in $[\gamma,\delta)$;

\item[$(ii)$] distinct numbers in $[\gamma,\delta)$
are $F$-inequivalent.
\end{itemize}
\end{proposition}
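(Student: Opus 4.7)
My plan is to deduce both parts from a careful analysis of the maximally extended strand through $(\gamma,\delta)$ furnished by Lemma~\ref{lem:twisty}. Starting from the local continuous strictly increasing map $\phi_{\gamma,\delta}$, I would extend it on both sides as far as possible while keeping its graph inside $\sS_{\alpha,\beta}^*$. Lemma~\ref{lem:twisty} says every point of $\sS_{\alpha,\beta}^*$ sits on such a strand, so the only ways the extension can fail are by hitting the boundary of $[\alpha,\beta]^2$ or by approaching the diagonal; the latter is excluded because local injectivity of $F$ prevents a sequence of strand points $(s_n,\phi(s_n))$ with $s_n\ne\phi(s_n)$ from converging to a diagonal point. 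The outcome is a continuous strictly increasing map $\Phi\colon[\alpha,s_1]\to[\Phi(\alpha),\beta]$ satisfying $\Phi(t)>t$ throughout, with $\Phi(\gamma)=\delta$, $\Phi(s_1)=\beta$, and $\Phi(\alpha)>\alpha$.

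For part~(ii), I assume for contradiction that $c<c'$ both lie in $[\gamma,\delta)$ with $c\cong c'$. If $c=\gamma$ then $c'\in(\gamma,\delta)$ satisfies $\gamma\cong c'$, directly contradicting $\gamma\rightsquigarrow\delta$. Otherwise $c>\gamma$, and I apply the same maximal-extension recipe to the strand through $(c,c')$ to produce a continuous strictly increasing $\phi\colon[\alpha,c]\to[\phi(\alpha),c']$ with $\phi(s)>s$. Evaluating at $\gamma$ gives $\gamma<\phi(\gamma)<\phi(c)=c'<\delta$, so $\gamma\cong\phi(\gamma)$ with $\phi(\gamma)\in(\gamma,\delta)$, again contradicting $\gamma\rightsquigarrow\delta$.

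For part~(i), given $t\in[\alpha,\beta]$, I iterate $\Phi$ or $\Phi^{-1}$ until the trajectory lands in $[\gamma,\delta)$: if the current value lies in $[\alpha,\gamma)$, replace it by $\Phi(t)\in[\Phi(\alpha),\delta)$; if it lies in $[\delta,\beta]$, replace it by $\Phi^{-1}(t)\in[\gamma,s_1]$. The function $\Phi(t)-t$ is continuous and strictly positive on the compact interval $[\alpha,s_1]$, so its minimum $\eta>0$ provides a uniform step size; this forces termination after at most $\lceil(\beta-\alpha)/\eta\rceil$ iterations, and termination must occur in $[\gamma,\delta)$ because neither branch of the iteration ever crosses over to the opposite side of $[\gamma,\delta)$. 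Since every iteration moves along the strand $\Phi\subseteq\sS_{\alpha,\beta}^*$, consecutive values are $F$-equivalent, so the terminal point in $[\gamma,\delta)$ is $F$-equivalent to the original~$t$.

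The main obstacle will be the first paragraph---justifying that the strand extends all the way to $s=\alpha$ (and symmetrically that $\Phi(s_1)=\beta$), and that $\Phi(t)>t$ persists throughout the extension. Both rest on the observation that local injectivity of $F$ forbids any strand from limiting onto the diagonal $\{(s,s)\}$; once this is in hand, the rest of the argument reduces to the monotone dynamics of $\Phi$ and $\Phi^{-1}$ on the fundamental domain $[\gamma,\delta)$.
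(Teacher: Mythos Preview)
Your approach is correct and genuinely different from the paper's. The paper proves $(i)$ by a direct sup/inf contradiction argument (if some $\xi$ fails to be $F$-equivalent to anything in $[\gamma,\delta)$, take the supremum $\eta$ of bad points below $\gamma$, and use the local maps of Lemma~\ref{lem:twisty} to manufacture a contradiction at $\eta$), and then deduces $(ii)$ from $(i)$ by applying $(i)$ to the smaller pair $\mu\rightsquigarrow\theta$ inside a subinterval. You instead globalize Lemma~\ref{lem:twisty} once and for all: patch the local charts into a single strictly increasing ``deck transformation'' $\Phi$ whose graph is a maximal strand in $\sS_{\alpha,\beta}^*$, argue via local injectivity that this strand can terminate only on the boundary of $[\alpha,\beta]^2$ (never on the diagonal), and then read off both parts from the dynamics of $\Phi$. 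Your argument for $(ii)$ is independent of $(i)$ and more direct than the paper's: pulling the strand through $(c,c')$ back to $\gamma$ immediately produces a forbidden point in $(\gamma,\delta)$. Your argument for $(i)$ is more constructive, giving an explicit bound on how many applications of $\Phi^{\pm1}$ are needed. The cost is that the global extension step (your acknowledged ``main obstacle'') requires spelling out that consecutive local charts agree on overlaps---which follows because $F$ is one-to-one on the target interval $\cJ$ in Lemma~\ref{lem:twisty}---and that the strand, being confined to the closed set $\sS_{\alpha,\beta}^*$ and bounded away from the diagonal, extends to a closed interval hitting $\partial[\alpha,\beta]^2$ on each end. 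Once that is written out carefully, your proof is complete and arguably gives a clearer structural picture of $[\gamma,\delta)$ as a fundamental domain for $\Phi$.
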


\begin{proof}
To prove $(i)$, suppose on the contrary that there exists $\xi\in[\alpha,\beta]$
which is \text{$F$-inequivalent} to every number in $[\gamma,\delta)$.  To achieve
the desired contradiction, we examine three cases separately.

\bigskip\noindent{\sc Case 1}: $\xi\in[\gamma,\delta]$. 
Since $\gamma\cong\delta$, we have $\xi\in(\gamma,\delta)$,
so this case is not possible as $\xi$ is $F$-equivalent to itself.

\bigskip\noindent{\sc Case 2}: $\xi\in[\alpha,\gamma)$.  Under this assumption, the supremum
$$
\eta:=\sup\{\tau\in[\alpha,\gamma):\tau
\text{~is $F$-inequivalent to every number in $[\gamma,\delta)$}\}
$$
exists, and we have $\eta\in[\alpha,\gamma]$. First, we claim that
$\eta\not\cong\gamma$.  Assume on the contrary that
$\eta\cong\gamma\cong\delta$.  We cannot have $\eta=\xi$ since
$\xi$ is \text{$F$-inequivalent} to every number in $[\gamma,\delta)$;
therefore, $\eta>\xi\ge\alpha$. By Lemma~\ref{lem:twisty},
$t\cong\phi_{\eta,\delta}(t)$ for all $t$ in some open
interval $\cI\in\Sigma_{\eta}$.
Since $\phi_{\eta,\delta}$ is strictly increasing on $\cI$
and $\phi_{\eta,\delta}(\eta)=\delta$,
we see that \emph{every} $\tau\in[\alpha,\eta)$ which is sufficiently close to $\eta$ 
is $F$-equivalent to a number in $[\gamma,\delta)$, namely $\phi_{\eta,\delta}(\tau)$.
This contradicts the definition of $\eta$ and thereby establishes the claim.

Now let $(u_k)_{k=1}^\infty$ be a strictly decreasing sequence in the
interval $(\eta,\gamma)$, with $u_k\to\eta^+$ as $k\to\infty$.  From the
definition of $\eta$, each $u_k$ is $F$-equivalent to some
$v_k\in[\gamma,\delta)\subseteq[\gamma,\delta]$.  Since
$\sS_{\alpha,\beta}^*$ and $[\gamma,\delta]$ are compact,
choosing a subsequence (if necessary)
we can assume that $(u_k,v_k)\to(\eta,\lambda)\in\sS_{\alpha,\beta}^*$ with
some $\lambda\in[\gamma,\delta]$.
Moreover, such $\lambda$ lies in the open interval $(\gamma,\delta)$
since $\eta\not\cong\gamma$ as shown above.  However, using Lemma~\ref{lem:twisty} again,
it follows that any $t$ that is sufficiently close to $\eta$ is \text{$F$-equivalent}
to a number in $[\gamma,\delta)$, namely $\phi_{\eta,\lambda}(t)$, and this
again contradicts to the definition of $\eta$.
Therefore, the case $\xi\in[\alpha,\gamma)$ is not possible.

\bigskip\noindent{\sc Case 3}: $\xi\in(\delta,\beta]$.
This is also impossible by an argument similar to that
given in Case~2, using instead the definition
$$
\eta:=\inf\{\tau\in(\delta,\beta]:
\tau\text{~is $F$-inequivalent to every number in $[\gamma,\delta)$}\}.
$$
We omit the details.  This completes our proof of $(i)$.

Turning to the proof of $(ii)$, suppose on the contrary that $\mu\cong\theta$ and
$\gamma\le\mu<\theta<\delta$; without loss of generality, we can assume that
$\mu\rightsquigarrow\theta$.  Since
$\gamma\rightsquigarrow\delta$, the interval $(\gamma,\delta)$ 
contains no number that is $F$-equivalent to $\gamma$; therefore,
$\theta\not\cong\gamma$, so $\mu\not\cong\gamma$ and $\gamma<\mu$.

Now, fix an arbitrary number $\lambda\in(\theta,\delta)$.
Applying the previous result $(i)$ with the sets $[\mu,\theta)$ and $[\gamma,\lambda]$
in place of $[\gamma,\delta)$ and $[\alpha,\beta]$, respectively, we see that
every number in $[\gamma,\lambda]$ is $F$-equivalent to some number
in $[\mu,\theta)$; in particular, $\gamma$ is $F$-equivalent to
a number $\eta$ for which $\gamma<\mu\le\eta<\theta<\delta$.  But this
is impossible since $\gamma\rightsquigarrow\delta$, and the
contradiction completes the proof of $(ii)$.
\end{proof}

\begin{proof}[Proof of Theorem~\ref{thm:two}]
Suppose that $\sS$ is uncountable, and put
$$
\sS^*:=\{(a,b)\in\R^2:a\ne b\text{~and $F$ shares curves around $a,b$}\}.
$$
We claim that $\sS^*\ne\varnothing$.  Indeed, assuming that $\sS^*=\varnothing$,
we have $\sS_{-n,n}^*=\varnothing$ for every natural number $n$, and therefore
$\sS_{-n,n}=\sS_{-n,n}^\circ$ is \emph{finite} as it is a closed
and isolated subset of the compact set $[-n,n]^2$.
But then $\sS=\bigcup_{n\ge 1}\sS_{-n,n}$
is a countable union of finite sets, hence countable.
This contradiction yields the claim.

Now, since $\sS^*\ne\varnothing$, we can find distinct real numbers
$\gamma,\delta$ such that $\gamma\cong\delta$,
and we can assume that $\gamma\rightsquigarrow\delta$.
As $[\gamma,\delta]\subseteq[-n,n]$ for all large~$n$, and $F$ is
constant on every $F$-equivalence class, Proposition~\ref{prop:funddomain} implies
that $F([-n,n])=F([\gamma,\delta])$ for all large $n$.  Letting $n\to\infty$,
we see that $\sC=F([\gamma,\delta])$, which is a loop in $\C$ since
$F(\gamma)=F(\delta)$.
\end{proof}

\section{Concluding remarks}

It would be interesting to determine whether the appropriate analogue
of Theorem~\ref{thm:one} can be established for arbitrary automorphic
$L$-functions.  For many functions in the Selberg class,
including all Dirichlet $L$-functions, an analogue of
Voronin's Universality Theorem is available and thus one can
prove that those $L$-functions are unbounded on the
critical line. However, establishing the local injectivity of
such functions may be difficult.

With a little extra work, Theorem~\ref{thm:two} can be generalized
and applied to arbitrary analytic curves on $\R$ (or on an interval in $\R$).
For such functions~$F$, if the set of self-intersections is uncountable,
then the curve continues to wind in the same direction around a
loop (when $F$ is locally injective), or else it bounces back at
least once along a regular curve, which need not be a loop
nor even bounded.

\bigskip

\textbf{Acknowledgments.}
The first author would like to thank Roger Baker, David Cardon
and Roger Heath-Brown for encouraging conversations during the
early stages of this project.

\end{document}